\documentclass[12pt,a4paper]{article}

\marginparwidth 0pt \oddsidemargin 0pt \evensidemargin 0pt
\topmargin -1.4 cm \textheight 24.5 truecm \textwidth 16.0 truecm
\parskip 4pt
\usepackage{graphicx,tikz}
\usepackage{latexsym}
\usepackage{amsmath}
\usepackage{amssymb}
\usepackage{array}
\usepackage{arydshln}

\usepackage{cite}
\usepackage{stmaryrd}
\usepackage{enumerate}

\usepackage{hyperref}

\usepackage{color}
\usepackage{lineno}
\usepackage{graphicx}
\usepackage{ae}
\usepackage{amsmath}
\usepackage{amssymb}
\usepackage{latexsym}
\usepackage{url}
\usepackage{epsfig}
\usepackage{mathrsfs}
\usepackage{amsfonts}
\usepackage{amsthm}
\usepackage{bm}





\newtheorem{theorem}{Theorem}[section]

\newtheorem{lemma}[theorem]{Lemma}

\theoremstyle{definition}

\newtheorem{problem}{Problem}


\numberwithin{equation}{section} 
\allowdisplaybreaks    


\def\qed{\hfill$\Box$\vspace{12pt}}



\long\def\delete#1{}



\usepackage{xcolor}
\usepackage[normalem]{ulem}

\usepackage{stfloats}

\begin{document}
\title{The asymptotic uniform distribution of subset sums}
\author{
~Jing Wang$^{a,b,c}$$^,$\thanks{Corresponding author. Email: wj66@mail.nwpu.edu.cn.}\\[2mm]
{\small $^a$School of Mathematics and Statistics,}\\[-0.8ex]
{\small Northwestern Polytechnical University, Xi'an, Shaanxi 710072, P.R.~China}\\[-0.8ex]
{\small $^b$Research \& Development Institute of Northwestern Polytechnical University in Shenzhen,}\\[-0.8ex]
{\small Shenzhen, Guangdong 518063, P.R. China}\\[-0.8ex]
{\small $^c$Xi'an-Budapest Joint Research Center for Combinatorics,}\\[-0.8ex]
{\small Northwestern Polytechnical University, Xi'an, Shaanxi 710129, P.R. China}\\
}

\date{}

\openup 0.5\jot
\maketitle
\begin{abstract}
Let $G$ be a finite abelian group of order $n$, and for each $a\in G$ and integer $1\le h\le n$ let $\mathcal{F}_a(h)$
denote the family of all  $h$-element subsets of $G$ whose sum is $a$.
A problem posed by Katona and Makar-Limanov is to determine whether the minimum and maximum sizes of the families $\mathcal{F}_a(h)$ (as $a$ ranges over $G$)  become asymptotically equal as $n\rightarrow \infty$ when $h=\left\lfloor\frac{n}{2}\right\rfloor$. 
We affirmatively answer this question and in fact show that the same asymptotic equality holds for every $4\leq h\leq \left\lfloor\frac{n}{2}\right\rfloor+1$.

\emph{Keywords:} 
finite abelian group, subset sums, extremal combinatorics.

\end{abstract}

\section{Introduction}

Let \([n] = \{1,2,\dots,n\}\) be a finite set, and let \(\mathcal{F} \subset 2^{[n]}\) be a family of its subsets. A  result in extremal combinatorics, \emph{Sperner's theorem} \cite{Sperner1928}, states that if \(\mathcal{F}\) contains no comparable pairs \(F_1 \subset F_2\) (\(F_1, F_2 \in \mathcal{F}\)), then  
\[
|\mathcal{F}| \leq \binom{n}{\lfloor \frac{n}{2} \rfloor}.
\]
This bound is sharp, as evidenced by the family consisting of all subsets of size \(\lfloor \frac{n}{2} \rfloor\).  

There are numerous extensions and variations of Sperner's theorem. One such result, due to Katona and Tarj\'{a}n \cite{KatonaT}, considers the case where \(\mathcal{F}\) contains no \emph{$2$-fork}, meaning no subset \(F\) that is included in two distinct sets \(F_1, F_2\) (\(F, F_1, F_2 \in \mathcal{F}\)). They established the following bounds for the maximum size of \(\mathcal{F}\):  
\[
\binom{n}{\lfloor \frac{n}{2} \rfloor} \left(1 + \frac{1}{n} + \Omega\left(\frac{1}{n^2}\right)\right) 
\leq \max |\mathcal{F}| 
\leq \binom{n}{\lfloor \frac{n}{2} \rfloor} \left(1 + \frac{2}{n}\right).
\]
Despite this progress, there remains a gap between the lower and upper bounds, which motivates further study.

There exists a construction for a family that forbids  $2$-fork. The construction consists of all subsets of size \(\lfloor \frac{n}{2} \rfloor\), along with an additional family \(\mathcal{F}(\lfloor \frac{n}{2} \rfloor +1)\) of \((\lfloor\frac{n}{2} \rfloor +1)\)-element subsets, which satisfies the following intersection property:
\begin{equation}\label{Aabeliangroup1}
|F_1 \cap F_2| < \left\lfloor \frac{n}{2} \right\rfloor \quad \text{for every } F_1, F_2 \in \mathcal{F}\left(\left\lfloor \frac{n}{2} \right\rfloor +1\right).
\end{equation}
This condition ensures that no subset \(F\) is contained in two distinct sets \(F_1, F_2\), thereby satisfying the required restriction.

A key approach to constructing the family \(\mathcal{F}(\lfloor \frac{n}{2} \rfloor +1)\) satisfying the above restriction is inspired by an old problem in \emph{coding theory}. The \emph{Hamming distance} between two binary sequences of length \(n\) is the number of positions in which they differ. A \emph{code of Hamming distance $d$} is a set of binary sequences of length \(n\) with a prescribed minimum Hamming distance \(d\). In particular, a \(h\)-\emph{constant-weight code} is one in which each sequence has exactly \(h\) ones. The problem of determining the largest size of a \(h\)-constant-weight code with Hamming distance of at least 4, denoted as \(C(n, h, 4)\), is equivalent to determining the largest family \(\mathcal{F}(h)\) satisfying (\ref{Aabeliangroup1}) when \(h = \lfloor \frac{n}{2} \rfloor + 1\).

The best-known bounds for \(\max |\mathcal{F}(\lfloor \frac{n}{2} \rfloor + 1)|\) were established in \cite{GrahamS1980, KatonaM2008}:  
\begin{equation}\label{Aabeliangroup0}
\frac{1}{n} \binom{n}{\lfloor \frac{n}{2} \rfloor+1} \leq \max |\mathcal{F}\left(\left\lfloor \frac{n}{2} \right\rfloor+1\right)| \leq \frac{2}{n} \binom{n}{\lfloor \frac{n}{2} \rfloor}.
\end{equation}
The factor of 2 between the lower and upper bounds remains an unresolved issue. In this paper, we study a  construction serving as a lower bound.

Let \(G\) be a finite abelian group of order \(n\). Define  the family
\[
\mathcal{F}_a(h) = \{ \{x_1, \dots, x_h\} \mid x_1, \dots, x_h \in G \text{ are distinct and } x_1 + \cdots + x_h = a\},
\]
for \(a\in G\) and $1\leq h\leq n$.
This construction, introduced in \cite{GrahamS1980} and further studied in \cite{KatonaM2008}, ensures that every two sets in \(\mathcal{F}_a(\lfloor \frac{n}{2} \rfloor +1)\) satisfy (\ref{Aabeliangroup1}).
 For more applications, the readers can refer to 
\cite{BoseRao1978,VarshamovT1965, DeBonisK2007,KatonaFor2008,Gerbner2018}.
Katona and Makar-Limanov \cite{KatonaM2008} posed the following question and they proved that has a positive solution for the group $\mathbb{Z}_2^r$.

\begin{problem}\label{Aabeliangroup}
\cite{KatonaM2008} Let \(G\) be a finite abelian group of order \(n\) and let \(h = \lfloor \frac{n}{2} \rfloor\). Is it always true that  
\[
\lim\limits_{n\rightarrow \infty}\frac{\min\limits_{a\in  G}|\mathcal{F}_a(h)|}{\max\limits_{a\in  G}|\mathcal{F}_a(h)|}=1?
\]
\end{problem}
Actually, Wagon and Wilf \cite{wagonwilf1994} also asked when the sums of the $h$-element subset are distributed uniformly in general. In this paper, we affirmatively answer Problem \ref{Aabeliangroup} and in fact show that the same asymptotic equality holds for every $4\leq h\leq \left\lfloor\frac{n}{2}\right\rfloor+1$. Moreover, since
$$|\mathcal{F}_a(h)|=|\mathcal{F}_{\sum\limits_{x\in G}x-a}(n-h)|,$$
the asymptotic uniform distribution extends to $\lfloor \frac{n}{2} \rfloor +1<h\leq n-4$ as well.
Very recently, Pach \cite{pachpp2025} also independently solved the problem using Kronecker-delta functions,  which differs from  our approach.

\begin{theorem}\label{Aabeliangroup-theo}
Let \(G\) be a finite abelian group of order \(n\) and let $h:=h(n)$ be a function of $n$. Then 
\[
\lim\limits_{n\rightarrow \infty}\frac{\min\limits_{a\in  G}|\mathcal{F}_a(h)|}{\max\limits_{a\in  G}|\mathcal{F}_a(h)|}=1
\]
holds for all \(4\leq h\leq \lfloor \frac{n}{2} \rfloor +1\).
\end{theorem}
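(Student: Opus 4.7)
The natural strategy is Fourier analysis on $G$. Writing $\widehat{G}$ for the character group, orthogonality of characters gives
\begin{equation*}
|\mathcal{F}_a(h)| \;=\; \frac{1}{n}\sum_{\chi\in\widehat{G}}\overline{\chi(a)}\,E_h(\chi),\qquad
E_h(\chi):=\sum_{\substack{S\subseteq G\\ |S|=h}}\prod_{x\in S}\chi(x).
\end{equation*}
The trivial character contributes the $a$-independent main term $\binom{n}{h}/n$, so the theorem reduces to showing that the total contribution of the nontrivial characters is $o(\binom{n}{h}/n)$, uniformly in $a\in G$.

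Next I would evaluate $E_h(\chi)$ for nontrivial $\chi$. Setting $d:=[G:\ker\chi]\ge 2$ (which divides $n$), the character $\chi$ takes each $d$-th root of unity exactly $n/d$ times on $G$, and hence
\begin{equation*}
\sum_{h=0}^{n}E_h(\chi)\,t^{h}\;=\;\prod_{x\in G}\bigl(1+t\chi(x)\bigr)\;=\;\bigl(1-(-1)^{d}t^{d}\bigr)^{n/d}.
\end{equation*}
Reading off coefficients, $E_h(\chi)=0$ unless $d\mid h$, in which case $|E_h(\chi)|=\binom{n/d}{h/d}$. Bounding the number of characters of each given order crudely by $n$, one obtains uniformly in $a$,
\begin{equation*}
\Bigl|\,|\mathcal{F}_a(h)|-\tfrac{1}{n}\binom{n}{h}\Bigr|
\;\le\;\sum_{\substack{d\mid h\\ d\ge 2}}\binom{n/d}{h/d}.
\end{equation*}

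The heart of the argument is the Vandermonde-type inequality $\binom{n/d}{h/d}^{d}\le\binom{n}{h}$, obtained by extracting the coefficient of $t^{h}$ from both sides of $(1+t)^{n}=\bigl((1+t)^{n/d}\bigr)^{d}$ and retaining only the balanced summand $k_{1}=\cdots=k_{d}=h/d$. This yields $\binom{n/d}{h/d}\le\binom{n}{h}^{1/d}\le\binom{n}{h}^{1/2}$ for every $d\ge 2$, so the total deviation is at most $\tau(h)\binom{n}{h}^{1/2}$, where $\tau(h)$ is the number of divisors of $h$. Dividing by the main term $\binom{n}{h}/n$, the relative error is bounded by $\tau(h)\,n\,\binom{n}{h}^{-1/2}$. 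Throughout the range $4\le h\le \lfloor n/2\rfloor+1$ we have $\binom{n}{h}\ge\binom{n}{4}\ge n^{4}/30$ for large $n$, while $\tau(h)=n^{o(1)}$, so this ratio is $O(\tau(h)/n)=o(1)$. Both $\min_a|\mathcal{F}_a(h)|$ and $\max_a|\mathcal{F}_a(h)|$ therefore equal $(1+o(1))\binom{n}{h}/n$, and their quotient tends to $1$.

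The main obstacle I anticipate is \emph{uniformity}: the estimate must be independent of the structure of $G$ and must hold over the entire range of $h$. The $2$-torsion of $G$ can be as large as $n$ (for example $G=\mathbb{Z}_{2}^{r}$), producing up to $n$ nontrivial characters of order $2$, so one cannot lean on any sparsity of characters. The crude estimate $N_{d}\le n$ discards this information entirely, and all the slack must be recovered from the smallness of $\binom{n/d}{h/d}$ compared to $\binom{n}{h}$. The Vandermonde inequality provides this in a completely structure-free manner, and the lower bound $h\ge 4$ is precisely what is needed to defeat the extraneous factor of $n$ arising from the worst-case abundance of order-$2$ characters.
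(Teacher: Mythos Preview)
Your argument is correct and takes a genuinely different route from the paper. The paper never invokes characters; instead it derives a recursion expressing $f_a(h)$ in terms of the values $f_{a-ix}(h-i)$ (Lemma~\ref{Acyclicgroup-lemma2}), and then proves by a somewhat laborious induction on $h$ an explicit upper bound on $\max_a f_a(h)-\min_a f_a(h)$ of order roughly $2^{3h/4}n^{h/2}/h!!$ (Lemma~\ref{Acyclicgroup-lemma3}), finally comparing this with $\tfrac{1}{n}\binom{n}{h}$ via Stirling. Your Fourier approach is considerably cleaner: the generating-function identity $\prod_{x}(1+t\chi(x))=(1-(-1)^{d}t^{d})^{n/d}$ makes $E_h(\chi)$ explicit, and the one-line Vandermonde estimate $\binom{n/d}{h/d}^{d}\le\binom{n}{h}$ handles all nontrivial characters uniformly, without any induction or case analysis on parity. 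It also shows transparently why $h\ge4$ is the right threshold (to absorb the factor~$n$ coming from the crude count of characters). The paper's method, on the other hand, is more elementary in that it avoids character theory entirely and would be accessible to a reader unfamiliar with Fourier analysis on finite groups. One small cosmetic point: your displayed sum should really run over $d\mid\gcd(n,h)$ rather than $d\mid h$, since $\binom{n/d}{h/d}$ is only defined (and the Vandermonde bound only applies) when $d\mid n$; this is harmless because characters of order $d\nmid n$ do not exist, so the bound only improves.
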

It is obvious that
$$\sum\limits_{a\in G}|F_a(h)|=\binom{n}{h}.$$ 
Theorem \ref{Aabeliangroup-theo} establishes the asymptotic uniformity of the sizes of $\mathcal{F}_a(h)$  for different values of \(a\), that is, for all $a\in G$, $$|\mathcal{F}_a(h)|\rightarrow \frac{1}{n}\binom{n}{h}$$ 
holds as $n\rightarrow \infty$ for $4\leq h\leq \lfloor \frac{n}{2} \rfloor +1$.  That also implies that the family $\mathcal{F}_a(h)$ is a construction serving as a lower bound of (\ref{Aabeliangroup0}) for $a\in G$ when $h=\lfloor \frac{n}{2} \rfloor +1$ and $n\rightarrow \infty$.

\section{The proof of Theorem \ref{Aabeliangroup-theo}}
For clarity, we define  $f_a(h)$ as the size of the family $\mathcal{F}_a(h)$.
Let $g^x_{a}(h,i)$ denote the number of subsets $\{x_1,\ldots,x_{h-i}\}\subseteq  G$ satisfying $x_1+\cdots+x_{h-i}=a-ix$ with $x\in \{x_1,\ldots,x_{h-i}\}$ for $1\leq i\leq h-1$.  In particular, for $i=h-1$,  $g^x_{a}(h,h-1)$ represents 1 if  $hx=a$ and $0$ otherwise.
\begin{lemma}\label{Acyclicgroup-lemma2}
 Let $h\leq n-1$ and $a\in G$. Then
    \begin{align*}\nonumber
        f_a(h)=
        \frac{1}{h}&\left(\binom{n}{h-1}-\sum\limits_{\substack{i=2}}^{h-1}(-1)^{i}\sum\limits_{x\in G}f_{a-ix}(h-i)+(-1)^{h-1}\sum\limits_{x\in G}g^x_{a}(h,h-1)\right).
    \end{align*}
\end{lemma}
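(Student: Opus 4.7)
The plan is to double-count pairs $(x,S)$ where $S$ is an $h$-element subset of $G$ with $\sum_{y\in S}y=a$ and $x\in S$. Summing over $S$, each such subset contributes exactly $h$ pairs, so the total equals $h f_a(h)$. Summing over $x$ instead, the pairs with a prescribed $x$ correspond (by $S\mapsto S\setminus\{x\}$) to the $(h-1)$-element subsets of $G\setminus\{x\}$ whose sum equals $a-x$. Writing $F(k,b,x)$ for the number of $k$-subsets of $G\setminus\{x\}$ summing to $b$, this gives
\[
h f_a(h)=\sum_{x\in G} F(h-1,\,a-x,\,x).
\]

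Next I would establish the elementary inclusion--exclusion recurrence $F(k,b,x)=f_b(k)-F(k-1,b-x,x)$ for $k\ge 1$: split the $k$-subsets of $G$ summing to $b$ according to whether they contain $x$, noting that those containing $x$ biject with $(k-1)$-subsets of $G\setminus\{x\}$ summing to $b-x$ upon removal of $x$. With the base case $F(0,b,x)=[\,b=0\,]$, iterating the recurrence $h-1$ times and reindexing with $i=j+1$ yields
\[
F(h-1,\,a-x,\,x)=\sum_{i=1}^{h-1}(-1)^{i-1} f_{a-ix}(h-i)+(-1)^{h-1}\,[\,hx=a\,].
\]

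Finally I would sum this identity over $x\in G$ and extract the three pieces appearing in the lemma. The $i=1$ contribution is $\sum_{x\in G} f_{a-x}(h-1)=\binom{n}{h-1}$, since each $(h-1)$-subset $T\subseteq G$ is counted for exactly the unique element $x=a-\sum_{y\in T}y$. For $2\le i\le h-1$, rewriting $(-1)^{i-1}=-(-1)^i$ produces the middle sum $-\sum_{i=2}^{h-1}(-1)^i\sum_{x\in G} f_{a-ix}(h-i)$ with the correct sign. The boundary contribution is $(-1)^{h-1}\sum_{x\in G}[\,hx=a\,]$, which by the stated definition of $g^x_a(h,h-1)$ equals $(-1)^{h-1}\sum_{x\in G} g^x_a(h,h-1)$. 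Dividing the resulting identity by $h$ gives the formula.

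The argument is essentially bookkeeping built on one clean double-count and one telescoping recurrence, so I anticipate no genuine obstacle. The only real hazard is a sign slip or off-by-one error while unwinding the recurrence, particularly in aligning the final $(-1)^{h-1}$ with the $[\,hx=a\,]$ boundary case and in converting $(-1)^{i-1}$ to $-(-1)^i$ so the middle sum matches the statement verbatim.
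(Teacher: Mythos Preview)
Your proof is correct and follows essentially the same route as the paper: a double-count yields $h f_a(h)$ in terms of $(h-1)$-subsets with a prescribed sum and a relation to a fixed element $x$, and then the same inclusion--exclusion recurrence (split on whether $x$ lies in the subset) is iterated down to the boundary $[hx=a]$. The only cosmetic difference is that you phrase everything via $F(k,b,x)$, the count of $k$-subsets \emph{avoiding} $x$, whereas the paper uses the complementary quantity $g^x_a(h,i)$, the count of subsets \emph{containing} $x$; since $F(k,b,x)=f_b(k)-g$ these two packagings are interchangeable and the telescoping is identical.
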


\begin{proof}
Observe that for any $ F_1, F_2 \in \mathcal{F}_a(h) $, we have $ |F_1 \cap F_2| < h - 1 $.  
Now, consider choosing $ h - 1 $ distinct elements $ x_1, x_2, \dots, x_{h-1} $ from $ G $. We call such a set \emph{good} if it can be extended to a member of $ \mathcal{F}_a(h) $ by adding a single element.  Otherwise, we call it a \emph{bad} set.

The equation  
$$
x_1 + x_2 + \dots + x_{h-1} + x_h = a
$$
determines a unique value for $ x_h $. However, if $ x_h \in \{x_1, x_2, \dots, x_{h-1}\} $, then this does not define a valid member of $ \mathcal{F}_a(h) $, as all elements must be distinct.  
Thus, the number of all bad $ (h-1) $-element sets is given by  
$$
\sum\limits_{x\in G} g^x_{a}(h,1).
$$
We derive that
\begin{equation}\label{Acyclicgroup3}
    f_a(h) = \frac{1}{h} \left(\binom{n}{h-1} - \sum\limits_{x\in G} g^x_{a}(h,1) \right).
\end{equation}  
Note that $g^x_{a}(h,i)$ is the number of subsets $\{x_1,\ldots,x_{h-i}\}\subseteq G$ such that $$x_1+\cdots+x_{h-i}=a-ix \text{~and~} x\in \{x_1,\ldots,x_{h-i}\}$$
for $1\leq i\leq h-1$. Equivalently, this is the number of subsets $\{x_1,\ldots,x_{h-(i+1)}\}$ satisfying
$$x_1+\cdots+x_{h-(i+1)}=a-(i+1)x \text{~and~} x\notin \{x_1,\ldots,x_{h-(i+1)}\}.$$
Notice that $f_{a - (i+1)x}(h - (i+1))$ is the number of all such subsets of size $h - (i+1)$ summing to $a - (i+1)x$,  regardless of whether they contain $x$ or not. Therefore, subtracting those that include $x$, which are counted by $g^x_{a}(h,i+1)$, yields the recurrence relation  
$$
g^x_{a}(h,i) = f_{a-(i+1)x}(h-(i+1))-g^x_{a}(h,i+1), \quad x\in G, \quad 1\leq i\leq h-2.
$$
Plugging it into \eqref{Acyclicgroup3}, we obtain the desired result.  
\qed
\end{proof}

\begin{lemma}\label{Acyclicgroup-lemma3}
Let $n\geq 4$ and $2\leq h\leq \left\lfloor\frac{n}{2}\right\rfloor+1$. Then 
\begin{align}\label{Acyclicgroup7}
    \mathop{\mathrm{max}}\limits_{a\in G}f_a(h)-\mathop{\mathrm{min}}\limits_{a\in G}f_a(h)\leq\left\{
    \begin{array}{ll}
    \frac{2^{\frac{3}{4}h}}{\prod\limits_{\substack{j=0\\j~ \text{is even} }}^{h-2}(h-j)}n^{\frac{h}{2}}, & \text{if}~ h ~\text{is even},\\[0.8cm]
    \frac{2^{\frac{3}{4}h}}{\prod\limits_{\substack{j=0\\j~ \text{is even} }}^{h-1}(h+1-j)}n^{\frac{h+1}{2}}, & \text{if}~ h~ \text{is odd}.
    \end{array}\right.
\end{align}
\end{lemma}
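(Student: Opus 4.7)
My plan is a strong induction on $h$, with Lemma \ref{Acyclicgroup-lemma2} supplying the recursion. Writing $D(h):=\max_{a\in G}f_a(h)-\min_{a\in G}f_a(h)$, I apply Lemma \ref{Acyclicgroup-lemma2} at two elements $a,b\in G$ and subtract; the $\binom{n}{h-1}/h$ term cancels and yields
\[
h\bigl(f_a(h)-f_b(h)\bigr)=-\sum_{i=2}^{h-1}(-1)^i\bigl[\Phi_i(a)-\Phi_i(b)\bigr]+(-1)^{h-1}\bigl[\Psi(a)-\Psi(b)\bigr],
\]
where $\Phi_i(a):=\sum_{x\in G}f_{a-ix}(h-i)$ and $\Psi(a):=\sum_{x\in G}g^x_a(h,h-1)=|\{x\in G:hx=a\}|$.

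The central estimate is $|\Phi_i(a)-\Phi_i(b)|\leq n\,D(h-i)$, obtained from a coset observation: since the multiplication-by-$i$ map $x\mapsto ix$ is $k_i$-to-one onto the subgroup $iG\leq G$ (with $k_i:=|\{x:ix=0\}|$), one has $\Phi_i(a)=k_i\sum_{z\in a+iG}f_z(h-i)$, which depends on $a$ only through its coset modulo $iG$. Each such coset contains $n/k_i$ elements and all the values $f_z(h-i)$ lie in an interval of length $D(h-i)$, so $|\Phi_i(a)-\Phi_i(b)|\leq k_i\cdot(n/k_i)\cdot D(h-i)=n\,D(h-i)$. For the boundary, $\Psi(a)$ takes values in $\{0,k\}$ with $k:=|\{x:hx=0\}|\leq n$, so $|\Psi(a)-\Psi(b)|\leq n$. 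This produces the master recurrence
\[
h\,D(h)\;\leq\;n\sum_{i=2}^{h-1}D(h-i)+n.
\]

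For the base cases $h=2,3$, a direct evaluation of Lemma \ref{Acyclicgroup-lemma2} gives $f_a(2)=(n-|\{x:2x=a\}|)/2$ and $f_a(3)=(\binom{n}{2}-n+|\{x:3x=a\}|)/3$, whence $D(2)\leq n/2$ and $D(3)\leq n/3$; both are well inside the claimed bound. For $h\geq 4$, let $B(h)$ denote the right-hand side of \eqref{Acyclicgroup7}; substituting $D(h-i)\leq B(h-i)$ into the master recurrence reduces the inductive step to verifying $\sum_{i=2}^{h-1}nB(h-i)+n\leq h\,B(h)$. A direct parity calculation shows that the leading contributions are $nB(h-2)/(hB(h))=2^{-3/2}$ (for both parities of $h$) and, when $h$ is even, $nB(h-3)/(hB(h))=2^{-9/4}$, together accounting for roughly $0.56\cdot hB(h)$. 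I expect the main obstacle to be the tail $i\geq 4$: each such term gains an extra factor $n^{-1}$ per two units of $i$ but is multiplied by a factorial-type product $(h-2)(h-4)\cdots$, so the plan is to exploit the hypothesis $h\leq\lfloor n/2\rfloor+1$ (whence $(h-2k)/n\leq 1/2$) to collapse the tail into a rapidly converging geometric series, which together with the tiny boundary contribution $n/(hB(h))$ sits comfortably inside the remaining slack of roughly $0.44\cdot hB(h)$, closing the induction.
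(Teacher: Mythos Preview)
Your proposal is correct and follows essentially the same route as the paper: induction on $h$ via Lemma~\ref{Acyclicgroup-lemma2}, the same master recurrence $h\,D(h)\le n\sum_{i=2}^{h-1}D(h-i)+n$ (the paper derives it by the cruder $\max/\min$ replacement rather than your coset observation, but the resulting inequality is identical), the same base cases $D(2)\le n/2$, $D(3)\le n/3$, and the same closing computation in which the hypothesis $h\le\lfloor n/2\rfloor+1$ turns the factorial products into powers of $n/2$ and collapses the tail into a geometric series with ratio $2^{-5/2}$. Your numerical checkpoints ($2^{-3/2}$, $2^{-9/4}$, total $\approx 0.56$) match the paper's computation exactly.
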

\begin{proof}
    We prove it by induction on $h$. Firstly, for the base case,  by Lemma \ref{Acyclicgroup-lemma2}, we have
    $$f_a(2)=\frac{1}{2}\left(\binom{n}{1}-\sum\limits_{x\in G}g_{a,x}(2,1)\right),~ f_a(3)=\frac{1}{3}\left(\binom{n}{2}-n+\sum\limits_{x\in G}g_{a,x}(3,2)\right).$$
  Recall that  $g^x_{a}(h,h-1)$ take the value 1 if  $hx=a$ and $0$ otherwise. Summing over all $x\in G$ yields
  $$0 \leq \sum\limits_{x\in G}g^x_{a}(h,h-1)\leq n,~ h\leq n-1.$$ 
  Consequently, we obtain
$$\mathop{\mathrm{max}}\limits_{a\in G}f_a(2)-\mathop{\mathrm{min}}\limits_{a\in G}f_a(2)\leq \frac{n}{2}, ~\mathop{\mathrm{max}}\limits_{a\in G}f_a(3)-\mathop{\mathrm{min}}\limits_{a\in G}f_a(3)\leq \frac{n}{3},$$
which satisfy (\ref{Acyclicgroup7}).
Now, assume that (\ref{Acyclicgroup7}) holds for all $h\leq k-1$. We proceed to prove it for $h = k$ by considering the following two cases.

\emph{Case 1:} If $k\geq 4$ is even, then 
by Lemma \ref{Acyclicgroup-lemma2}, we have
\begin{align*}\label{Acyclicgroup8}\nonumber
&\mathop{\mathrm{max}}\limits_{a\in G}f_a(k)-\mathop{\mathrm{min}}\limits_{a\in G}f_a(k)\\\nonumber
 &\leq \frac{1}{k}\left[\sum\limits_{\substack{i=2}}^{k-1}\sum_{x\in G}\left(\mathop{\mathrm{max}}\limits_{a\in G}f_{a-ix}(k-i)-\mathop{\mathrm{min}}\limits_{a\in G}f_{a-ix}(k-i)\right)+\mathop{\mathrm{max}}\limits_{a\in G}\sum\limits_{x\in G}g^x_{a}(k,k-1)\right.\\[0.2cm]\nonumber
 &\left.~~~~~~~~~~~-\mathop{\mathrm{min}}\limits_{a\in G}\sum\limits_{x\in G}g^x_{a}(k,k-1)\right] \\[0.2cm]\nonumber
&\leq \sum\limits_{\substack{i=2\\i\text{~is even}}}^{k-2}\frac{2^{\frac{3}{4}(k-i)}}{k\prod\limits_{\substack{j=0\\j~ \text{is even} }}^{k-i-2}(k-i-j)}n^{\frac{k-i}{2}+1}+\sum\limits_{\substack{i=3\\i\text{~is odd}}}^{k-1}\frac{2^{\frac{3}{4}(k-i)}}{k\prod\limits_{\substack{j=0\\j~ \text{is even} }}^{k-i-1}(k-i+1-j)}n^{\frac{k-i+3}{2}}+\frac{n}{k}\\[0.2cm]\nonumber
&=\left(\sum\limits_{\substack{i=2\\i\text{~is even}}}^{k-2}\frac{\prod\limits_{\substack{j=2\\j~ \text{is even} }}^{i-2}(k-j)}{2^{\frac{3}{4}i}n^{\frac{i-2}{2}}}
+\sum\limits_{\substack{i=3\\i\text{~is odd}}}^{k-1}\frac{\prod\limits_{\substack{j=2\\j~ \text{is even} }}^{i-3}(k-j)}{2^{\frac{3}{4}i}n^{\frac{i-3}{2}}}
+\frac{4\cdot\frac{n}{2}\cdot\prod\limits_{\substack{j=2\\j~ \text{is even} }}^{k-4}(k-j)}{2^{\frac{3}{4}k}n^{\frac{k}{2}}}\right)\frac{2^{\frac{3}{4}k}n^{\frac{k}{2}}}{\prod\limits_{\substack{j=0\\j~ \text{is even} }}^{k-2}(k-j)}\\[0.2cm]\nonumber
&\leq \left(\sum\limits_{\substack{i=2\\i\text{~is even}}}^{k-2}
\frac{(\frac{n}{2})^{\frac{i-2}{2}}}{2^{\frac{3}{4}i}n^{\frac{i-2}{2}}}
+\sum\limits_{\substack{i=3\\i\text{~is odd}}}^{k-1}
\frac{(\frac{n}{2})^{\frac{i-3}{2}}}{2^{\frac{3}{4}i}n^{\frac{i-3}{2}}}
+\frac{4\cdot(\frac{n}{2})^{\frac{k}{2}-1}}{2^{\frac{3}{4}k}n^{\frac{k}{2}}}
\right)
\frac{2^{\frac{3}{4}k}}
{\prod\limits_{\substack{j=0\\j~ \text{is even} }}^{k-2}(k-j)}n^{\frac{k}{2}}\\[0.2cm]\nonumber
&\leq \left(\sum\limits_{\substack{i=2\\i\text{~is even}}}^{k-2}\left(\frac{1}{2}\right)^{\frac{5}{4}i-1}+\sum\limits_{\substack{i=3\\i\text{~is odd}}}^{k-1}\left(\frac{1}{2}\right)^{\frac{5}{4}i-\frac{3}{2}}+\frac{1}{16}\right)\frac{2^{\frac{3}{4}k}}{\prod\limits_{\substack{j=0\\j~ \text{is even} }}^{k-2}(k-j)}n^{\frac{k}{2}}\\[0.2cm]
&\leq \left(\frac{\left(\frac{1}{2}\right)^{\frac{3}{2}}+\left(\frac{1}{2}\right)^{\frac{9}{4}}}{1-\left(\frac{1}{2}\right)^{\frac{5}{2}}}+\frac{1}{16}\right)\frac{2^{\frac{3}{4}k}}{\prod\limits_{\substack{j=0\\j~ \text{is even} }}^{k-2}(k-j)}n^{\frac{k}{2}}\\[0.2cm]
&\leq \frac{2^{\frac{3}{4}k}}{\prod\limits_{\substack{j=0\\j~ \text{is even} }}^{k-2}(k-j)}n^{\frac{k}{2}}.
\end{align*}

\emph{Case 2:} If $k\geq 5$ is odd, then by Lemma \ref{Acyclicgroup-lemma2}, we have
\begin{align*}\nonumber
&\mathop{\mathrm{max}}\limits_{a\in G}f_a(k)-\mathop{\mathrm{min}}\limits_{a\in G}f_a(k)\\\nonumber
 &\leq \frac{1}{k}\left[\sum\limits_{\substack{i=2}}^{k-1}\sum_{x\in G}\left(\mathop{\mathrm{max}}\limits_{a\in G}f_{a-ix}(k-i)-\mathop{\mathrm{min}}\limits_{a\in G}f_{a-ix}(k-i)\right)+\mathop{\mathrm{max}}\limits_{a\in G}\sum\limits_{x\in G}g^x_{a}(k,k-1)\right.\\[0.2cm]\nonumber
 &\left.~~~~~~~~~~-\mathop{\mathrm{min}}\limits_{a\in G}\sum\limits_{x\in G}g^x_{a}(k,k-1)\right] \\[0.2cm]\nonumber
&\leq \sum\limits_{\substack{i=2\\i\text{~is even}}}^{k-1}\frac{2^{\frac{3}{4}(k-i)}}{k\prod\limits_{\substack{j=0\\j~ \text{is even} }}^{k-i-1}(k-i+1-j)}n^{\frac{k-i+3}{2}}+\sum\limits_{\substack{i=3\\i\text{~is odd}}}^{k-2}\frac{2^{\frac{3}{4}(k-i)}}{k\prod\limits_{\substack{j=0\\j~ \text{is even} }}^{k-i-2}(k-i-j)}n^{\frac{k-i}{2}+1}+\frac{n}{k}\\[0.2cm]\nonumber
&=\left(\sum\limits_{\substack{i=2\\i\text{~is even}}}^{k-1}\frac{\prod\limits_{\substack{j=-1\\j~ \text{is odd} }}^{i-3}(k-j)}{k\cdot2^{\frac{3}{4}i}n^{\frac{i-2}{2}}}
+\sum\limits_{\substack{i=3\\i\text{~is odd}}}^{k-2}\frac{\prod\limits_{\substack{j=-1\\j~ \text{is odd} }}^{i-2}(k-j)}{k\cdot 2^{\frac{3}{4}i}n^{\frac{i-1}{2}}}
+\frac{4\cdot\frac{n}{2}\prod\limits_{\substack{j=-1\\j~ \text{is odd} }}^{k-4}(k-j)}{k\cdot2^{\frac{3}{4}k}n^{\frac{k+1}{2}}}\right)
\frac{2^{\frac{3}{4}k}n^{\frac{k+1}{2}}}{\prod\limits_{\substack{j=-1\\j~ \text{is odd} }}^{k-2}(k-j)}\\[0.2cm]\nonumber
&\leq\left(1+\frac{1}{k}\right)\left(\sum\limits_{\substack{i=2\\i\text{~is even}}}^{k-1}\frac{\left(\frac{n}{2}\right)^{\frac{i-2}{2}}}{2^{\frac{3}{4}i}n^{\frac{i-2}{2}}}
+\sum\limits_{\substack{i=3\\i\text{~is odd}}}^{k-2}\frac{\left(\frac{n}{2}\right)^{\frac{i-1}{2}}}{2^{\frac{3}{4}i}n^{\frac{i-1}{2}}}
+\frac{4\cdot\frac{n}{2}\cdot \left(\frac{n}{2}\right)^{\frac{k-3}{2}}}{2^{\frac{3}{4}k}n^{\frac{k+1}{2}}}\right)
\frac{2^{\frac{3}{4}k}n^{\frac{k+1}{2}}}{\prod\limits_{\substack{j=-1\\j~ \text{is odd} }}^{k-2}(k-j)}\\[0.2cm]\nonumber
&\leq \frac{6}{5}\left(\sum\limits_{\substack{i=2\\i\text{~is even}}}^{k-1}\left(\frac{1}{2}\right)^{\frac{5}{4}i-1}+\sum\limits_{\substack{i=3\\i\text{~is odd}}}^{k-2}\left(\frac{1}{2}\right)^{\frac{5}{4}i-\frac{1}{2}}+\left(\frac{1}{2}\right)^{\frac{23}{4}}\right)\frac{2^{\frac{3}{4}k}}{\prod\limits_{\substack{j=0\\j~ \text{is even} }}^{k-1}(k+1-j)}n^{\frac{k+1}{2}}\\
&\leq \frac{6}{5}\left(\frac{\left(\frac{1}{2}\right)^{\frac{3}{2}}+\left(\frac{1}{2}\right)^{\frac{13}{4}}}{1-\left(\frac{1}{2}\right)^{\frac{5}{2}}}+\left(\frac{1}{2}\right)^{\frac{23}{4}}\right)\frac{2^{\frac{3}{4}k}}{\prod\limits_{\substack{j=0\\j~ \text{is even} }}^{k-1}(k+1-j)}n^{\frac{k+1}{2}}\\
    &\leq \frac{2^{\frac{3}{4}k}}{\prod\limits_{\substack{j=0\\j~ \text{is even} }}^{k-1}(k+1-j)}n^{\frac{k+1}{2}}.
\end{align*}
That completes the proof.
\qed\end{proof}

\begin{Tproof}\textbf{~of~Theorem~\ref{Aabeliangroup-theo}.}
If $h$ is even, then by Lemma (\ref{Acyclicgroup-lemma3}), we have
\begin{equation}\label{Acyclicgroup10}
\mathop{\mathrm{max}}\limits_{a\in G}f_a\left(h\right)-\mathop{\mathrm{min}}\limits_{a\in  G}f_a\left(h\right)\leq
    \frac{2^{\frac{3}{4}h}}{\prod\limits_{\substack{j=0\\j~ \text{is even} }}^{h-2}(h-j)}n^{\frac{h}{2}}.
\end{equation}
Since $$\sum\limits_{a\in G}f_a(h)=\binom{n}{h},$$ 
it follows that $$\mathop{\mathrm{max}}\limits_{a\in  G}f_a\left(h\right)\geq \frac{1}{n}\binom{n}{h}.$$ 
Divide both sides of inequality (\ref{Acyclicgroup10}) by $\mathop{\mathrm{max}}\limits_{a\in  G}f_a\left(h\right)$. Then
\begin{align*}
\frac{\mathop{\mathrm{min}}\limits_{a\in  G}f_a\left(h\right)}{\mathop{\mathrm{max}}\limits_{a\in  G}f_a\left(h\right)}
&\geq 1-\frac{2^{\frac{3}{4}h}\cdot n^{\frac{h}{2}}}{\prod\limits_{\substack{j=0\\j~ \text{is even} }}^{h-2}(h-j)\cdot \mathop{\mathrm{max}}\limits_{a\in  G}f_a\left(h\right)}\\
&\geq 1-\frac{2^{\frac{3}{4}h}\cdot n^{\frac{h}{2}}}{\prod\limits_{\substack{j=0\\j~ \text{is even} }}^{h-2}(h-j)\cdot \frac{1}{n}\binom{n}{h}}\\
&=1-\frac{2^{\frac{3}{4}h}\cdot n^{\frac{h}{2}+1}\cdot (n-h)!\cdot\prod\limits_{\substack{j=1\\j~ \text{is odd} }}^{h-1}(h-j) }{n!}\\[0.2cm]
&\geq 1-\frac{2^{\frac{3}{4}h}\cdot n^{\frac{h}{2}+1}\cdot(n-h)!\cdot\sqrt{h!}}{n!}.
\end{align*}
Define $$X(h)=\frac{2^{\frac{3}{4}h}\cdot n^{\frac{h}{2}+1}\cdot (n-h)!\cdot\sqrt{h!}}{n!}.$$
Next, we analyze the ratio
$$\frac{X(h+1)}{X(h)}=\frac{2^{\frac{3}{4}}\sqrt{n(h+1)}}{n-h},$$
which is a monotonically increasing in $h$. 
Consequently,
$$0\leq X(h)\leq \mathop{\mathrm{max}}\left\{X(4),X\left(\left\lfloor\frac{n}{2}\right\rfloor+1\right)\right\}.$$
Without loss of generality, we assume that $n$ is even.
Then
\begin{align}\label{EDSS-equation1}\nonumber
\ln {X\left(\frac{n}{2}+1\right)}
&= \left(\frac{3n}{8}+\frac{3}{4}\right)\ln 2 + \left(\frac{n}{4}+\frac{3}{2}\right)\ln n + \ln\left(\left(\frac{n}{2}-1\right)!\right)\\
&+\frac{1}{2}\ln\left(\left(\frac{n}{2}+1\right)!\right) - \ln(n!)
\end{align}
Using Stirling's formula, we have
$$\ln\left(\left(\frac{n}{2}-1\right)!\right)
= \frac{n}{2}\ln n - \frac{n}{2}\ln 2 - \frac{n}{2}+ o\left(n\right),
$$
$$
\frac{1}{2}\ln\left(\left(\frac{n}{2}+1\right)!\right) 
= \frac{n}{4}\ln n - \frac{n}{4}\ln 2 - \frac{n}{4}+ o\left(n\right),
$$
and 
$$
\ln(n!) = n\ln n - n + o\left(n\right). 
$$
Substituting these into (\ref{EDSS-equation1}) yields
$$\ln {X\left(\frac{n}{2}+1\right)}=\left(\frac{1}{4} - \frac{3}{8}\ln 2\right)n+o(n).$$
Since $\frac{1}{4} - \frac{3}{8}\ln 2\approx-0.00993<0$,
we have
$$\lim\limits_{n\rightarrow \infty}X\left(\frac{n}{2}+1\right)=\lim\limits_{n\rightarrow \infty}\exp\left(\ln {X\left(\frac{n}{2}+1\right)}\right)=0.$$
The same conclusion holds when $n$ is odd. Moreover,
$$\lim\limits_{n\rightarrow \infty}X(4)=\lim\limits_{n\rightarrow \infty}\frac{2^3\cdot n^3\cdot \sqrt{4!}}{n(n-1)(n-2)(n-3)}=0.$$
Thus, for all $4\leq h\leq \left\lfloor\frac{n}{2}\right\rfloor+1$, we have
$$\lim\limits_{n\rightarrow \infty}X(h)=0.$$ 
That is, 
$$\lim\limits_{n\rightarrow \infty}\frac{\mathop{\mathrm{min}}\limits_{a\in  G}f_a\left(h\right)}{\mathop{\mathrm{max}}\limits_{a\in  G}f_a\left(h\right)}\geq  1,~ 4\leq h\leq \left\lfloor\frac{n}{2}\right\rfloor+1.$$
Clearly, the fraction is at most $1$, which completes the proof.

If $h$ is odd, the proof follows analogously to the even case, and thus the details are omitted.
\qed\end{Tproof}

\section*{Acknowledgements}
The author greatly appreciate Gyula O.H. Katona for proposing the question and for his insightful discussions. The author is also grateful to P\'{e}ter P\'{a}l Pach for his helpful suggestions and to the anonymous referees for their valuable comments. The work is supported by China Scholarship Council (No.202406290013), the National Natural Science Foundation of China (No.12271439, No.12371358) and the Guangdong Basic and Applied Basic Research Foundation (No.2023A1515010986).

\end{document}